\newtheorem{theorem}{Theorem}
\newtheorem{lemma}{Lemma}
\numberwithin{equation}{section}
\begin{document}

\title{On Bounds for the Smallest and the Largest Eigenvalues of GCD and LCM Matrices}

\author{Ercan Alt\i n\i \c{s}\i k  \\
\small ealtinisik@gazi.edu.tr, ealtinisik@gmail.com\\
\small Department of Mathematics, Faculty of Sciences, Gazi University \\
\small 06500 Teknikokullar - Ankara, Turkey
\and
 \c{S}erife B\"{u}y\"{u}kk\"{o}se \\
\small sbuyukkose@gazi.edu.tr, serifebuyukkose@gmail.com\\
\small Department of Mathematics, Faculty of Sciences, Gazi University \\
\small 06500 Teknikokullar - Ankara, Turkey
}
\date{\vspace{-5ex}}
\maketitle

\begin{abstract}
In this paper, we study the eigenvalues of the GCD matrix $(S_n)$ and the LCM matrix $[S_n]$ defined on $S_n=\{1,2,\ldots,n\}$. We present upper and lower bounds for the smallest and the largest eigenvalues of $(S_n)$ and $[S_n]$ in terms of particular arithmetical functions.

\noindent
\footnotesize{\textbf{Keywords:} the gcd matrix, the lcm matrix, Euler's phi-function, eigenvalue inequalities, Cauchy's interlacing inequalities.}

\noindent
\footnotesize{\textbf{2010 MSC:} 05C88, 05C89.}

\end{abstract}

\section{Introduction and Preliminaries}
Let $S=\{x_1,x_2,\ldots,x_n\}$ be a set of distinct positive integers.
$(x_i,x_j)$ and $[x_i,x_j]$ denote the greatest common divisor and the least
common multiply of $x_i$ and $x_j$, respectively. The $n \times n$ matrices
$(S)=((x_i,x_j))$ and $[S]=([x_i,x_j])$ are respectively called the GCD matrix and the LCM matrix on $S$. Initially, Smith \cite{Smith} proved that
$det(S)=\prod_{k=1}^n \varphi(k)$, where $\varphi$ is Euler's totient. Since
Smith's paper many generalizations of Smith's result have been published in the
literature. For general accounts see e.g.
\cite{{Altinisik2005},{Hauk1997},{KorkeeHauk2003},{SandorCrs}}.

In 1989, Beslin and Ligh \cite{BeslinLigh} proved that $(S)$ is positive definite for any set $S$ of distinct positive integers but $[S]$ is not positive definite. Therefore, all eigenvalues of
$(S)$ are positive reals but all eigenvalues of $[S]$ need not be positive (see \cite{BourqueLigh1992}).

On the other hand, some authors studied the eigenvalues of GCD-related matrices by using
some tools of functional analysis \cite{Wintner, Lindqvist} Let $\lambda_n^{(1)} \leq \lambda_n^{(2)}
\leq \cdots \leq \lambda_n^{(n)}$ be the eigenvalues of the $n \times$ matrix $
(M_n^\varepsilon)=\big( \frac {(i,j)^{2\varepsilon}}{i^\varepsilon
j^\varepsilon} \big), $ where $\varepsilon$ is a real number.

In 2004, Hong and Loewy \cite{Hong} have investigated the asymptotic behavior
of the eigenvalues of power GCD matrices. Let $ \{x_i\}_{i=1}^\infty $ be a
given arbitrary strictly increasing infinite sequence of positive integers. For
any integer $n \geq 1$, let $S_n = \{ x_1,x_2,\ldots,x_n \}$ and $\varepsilon$
be a real number. The $n \times n$ matrix
$(S_n^\varepsilon)=((x_i,x_j)^\varepsilon)$ is called the power GCD matrix on
$S_n$. Let $\lambda_n^{(1)} \leq \cdots \leq \lambda_n^{(n)}$ be the
eigenvalues of the power GCD matrix $(S_n^\varepsilon)$. For a sequence
$\{x_i\}_{i=1}^\infty$ satisfying that for any $i \neq j$, $(x_i,x_j)=x$, a
given integer and that $\sum _{i=1}^\infty \frac{1} {x_i}=\infty$, they showed
that $\lim_{n\rightarrow\infty} \lambda_n^{(1)}=x_1^\varepsilon-x^\varepsilon$
if $0 < \varepsilon \leq 1$. Then, for the arithmetic progression $\{
x_{i-e+1}=a+bi \}_{i=1}^\infty $, where $a \geq 0$, $b \geq 1$ and $e \geq 0$
are any given integers, they showed that if $0 < \varepsilon \leq 1 $, then
$\lim_{n\rightarrow\infty} \lambda_n^{(q)}=0$ for a fixed integer $q \geq$.
They also conjectured that if $\epsilon>1$ then $\lim_{n\rightarrow \infty}
\lambda_n ^{(1)} >0$.

In 2009, Alt\i n\i \c{s}\i k \cite{Altinisik2009} investigated the inverse of
GCD matrices associated with multiplicative functions and gave a proof of the
Hong-Loewy conjecture. Let $C=\{1,2,\ldots,m\}$ and $f$ be a multiplicative
function such that $(f \ast \mu)(k) > 0$ for every positive integer $k$ and the
Euler product $\zeta_f = \prod_\wp (1-\frac{1}{f(\wp)})^{-1}$ converges. Let
$(C_f)=(f(i,j))$ be the $m\times m$ matrix defined on the set $C$ having $f$
evaluated at the greatest common divisor $(i,j)$ of $i$ and $j$ as its
$ij-$entry. Alt\i n\i \c{s}\i k gave lower bounds for the smallest eigenvalue $\lambda_m^{(1)}(C_f)$ of the $m \times m$ matrix $(C_f)$ in terms of the Riemann zeta
function for each $f=N^\varepsilon$, $J_\varepsilon$ and
$\sigma_\varepsilon$. In the same paper, he showed that $\lim_{m \rightarrow
\infty}\lambda_m^{(1)}(C_{N^\varepsilon})>0$ if $\varepsilon > 1$ and gave a proof of the Hong-Loewy conjecture.

In the study of eigenvalues of the GCD and related matrices Hong and Enoch Lee
\cite{HongEnochLee} examined the asymptotic behavior of the eigenvalues of the
reciprocal LCM matrix $[\frac{1}{S^r} ] = (\frac{1}{[x_i,x_j]^r})$ on $S_n = \{
x_1,x_2,\ldots,x_n \}$ for a positive real number $r$. Let $\lambda_n^{(q)}$ be
the $q$-th smallest eigenvalue of $[\frac{1}{S^r} ]$ for a given arbitrary
integer $q \geq
1$. They showed that $ 0 < \lambda_n^{(1)} \leq \frac{1}{n} ( \frac{1}{x_1^r} + \cdots +
\frac{1}{x_n^r} )$, $ \lim_{n \rightarrow \infty} \lambda_n^{(q)} =0 $, and $\lambda_n^{(k)} \leq \frac{k}{x_{n-k+1}^r}$ for $1 \leq k \leq n$. Also, Hong \cite{Hong2008} obtained a lower bound depending only $x_1$ and $n$ for the smallest eigenvalue of $(f(x_i,x_j))$ having evaluated at $(x_i,x_j)$ as its $ij-$entry if $(f\ast\mu)(d)>0$ whenever $d|x$ for $x\in S_n$.

Let $c \geq 1$ and $d \geq 0$ be integers and $q \geq 1$ be a given arbitrary
integer. Let $f_1,f_2, \ldots , f_c \in C = \{f : (f \ast \mu)(d^\prime )\geq 0
\textrm{ whenever } d^\prime | x \textrm{ for any } x \in \{x_i\}_{i=1}^\infty
 \} $ be distinct and $(\ell_1, \ldots , \ell_c) \in \mathbb{Z}_{>0}^c$ satisfy
$\ell_1+ \cdots + \ell_c >d$. Let $\lambda_n^{(1)} \leq \cdots \lambda_n^{(n)}$
be eigenvalues of the $n \times n$ matrix $((f_1^{(\ell_1)} \ast f_2^{(\ell_2)}
\ast \cdots \ast f_c^{(\ell_c)} \ast \mu^{(d)} ) (x_i,x_j) )$ defined on
$S=\{x_1,x_2,\ldots,x_n\}$, where
$$f_k^{(\ell_k)}=\underbrace{f_k \ast \cdots
\ast f_k}_{\ell_k \textrm{ times } f_k } \textrm{  and  }  \mu^{(d)}
=\underbrace{\mu \ast \cdots \ast \mu}_{d \textrm{ times } \mu}. $$ Then Hong and Loewy \cite{HongLoewy2011} showed that the matrix
$$
\big( (f_1^{(\ell_1)} \ast f_2^{(\ell_2)} \ast \cdots \ast f_c^{(\ell_c)} \ast
\mu^{(d)} ) (x_i,x_j) \big)
$$
is positive definite and $ \lim_{n \rightarrow \infty}
\lambda_n^{(q)} \geq 0 $ for a given arbitrary integer $q \geq 1$.

On a different point of view, in 2008, the authors
\cite{IlmonenHaukkanenMerikoski} examined the eigenvalues of certain abstract
generalizations of the GCD matrix and the LCM matrix on posets. Recently,
Mattila and Haukkanen \cite{MattilaHauk2014} and Mattila \cite{Mattila2014} gave new bounds for the eigenvalues of such abstract generalizations.

In addition to above papers, in 2012, Mattila and Haukkanen \cite{Mattila2012, Mattila2012paper} investigated the eigenvalues of the $n \times n$ matrix $A^{\alpha,\beta}_n = ((i,j)^\alpha [i,j]^\beta)$, where $\alpha, \beta \in \mathbb{R}$. They proved that $\lambda_n^{(1)} \geq t_n \cdot \min_{1\leq i \leq n } J_{\alpha-\beta}(i) \cdot \min\{ 1, n^{2 \beta} \}$, where $\lambda_n^{(1)}$ is the smallest eigenvalue of $A^{\alpha,\beta}_n$ and $\alpha > \beta$, and obtained a real interval which provides a broad bounds for the eigenvalues of the matrix $A^{\alpha,\beta}_n$.

Except $A^{\alpha,\beta}_n$, all above matrices of which eigenvalues were investigated are positive
definite. It is clear that the LCM matrix is not positive definite so some
eigenvalues are negative. Therefore, to investigate asymptotic behavior of the eigenvalues of the LCM matrix $[S]$ is not easy. Except the papers \cite{Mattila2012, Mattila2012paper}, the eigenvalues of the LCM matrix have not hitherto been studied and any upper (lower) bounds for the smallest (largest) eigenvalues of $(S_n)$ and $[S_n]$ have not been presented earlier in the literature. In this paper we investigate the smallest and the largest eigenvalues of the GCD matrix $(S_n)$ as well as the LCM matrix $[S_n]$ defined on $S_n=\{ 1,2,\cdots,n \}$. In order to perform this investigation we use some results of the excellent paper \cite{WolkowiczStyan1980} on the eigenvalue inequalities for a complex matrix. We improve the eigenvalue inequalities given in \cite{WolkowiczStyan1980} for our matrices, which are clearly real symmetric, and hence we present upper and lower bounds for the smallest and the largest eigenvalues of $(S_n)$ and $[S_n]$ in terms of particular arithmetical functions.

Now we summarize number theoretical tools used in this paper. The Dirichlet product $f \ast g$ of arithmetical functions $f$ and $g$ is defined as
\begin{eqnarray*}
(f\ast g)(n) = \sum_{d|n} f(d) g(n/d)
\end{eqnarray*}
and the usual product of $f$ and $g$ is defined as $(fg)(n)=f(n)g(n)$ for all positive integers $n$. We denote Euler's totient function by $\varphi$ and the M\"{o}bius function by $\mu$. Also, the power function $N_\alpha$ is defined as $N_\alpha(n)=n^\alpha$ for all positive integers $n$, where $\alpha \in \mathbb{R}$. Note that $N_1=N$. The zeta function $\zeta$ is defined as $\zeta(n)=1$ for all positive integers $n$. One can find undefined terms for arithmetical functions in the text of Sivaramakrishnan \cite{Siva}.

\section{Main Results}
Now we investigate for the smallest and largest eigenvalue of the GCD matrix $(S_n)$ and the LCM matrix $[S_n]$. In order to do this, we use the following elegant result given for any $n \times n$ complex matrix with real eigenvalues. For undefined terms and fundamental results on eigenvalues of matrices one can consult the text of Horn and Johnson \cite{HornJohnson}.

\begin{theorem} [Theorem 2.1 in \cite{WolkowiczStyan1980}] \label{thmWolStyan}
Let $A$ be an $n \times n$ complex matrix with real eigenvalues $ \lambda(A)$,
and let $m=trA/n$, $s^2=tr(A^2)/n - m^2$. Then
\begin{equation} \label{min}
m-s(n-1)^{1/2} \leq \lambda_{min}(A) \leq m-s/(n-1)^{1/2},
\end{equation}
\begin{equation} \label{max}
m+s/(n-1)^{1/2} \leq \lambda_{max}(A) \leq m+s(n-1)^{1/2}.
\end{equation}
Equality holds on the left (right) of (\ref{min}) if and only if equality holds
on the left (right) of (\ref{max}) if and only if the $n-1$ largest (smallest)
eigenvalues are equal.
\end{theorem}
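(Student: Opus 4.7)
The plan is to reduce the statement to a property of the multiset of real eigenvalues $\lambda_1 \leq \cdots \leq \lambda_n$ alone, since the hypotheses and conclusions involve only the first two power sums $\mathrm{tr}(A) = \sum_i \lambda_i$ and $\mathrm{tr}(A^2) = \sum_i \lambda_i^2$ (the latter identity being valid for any complex matrix by the Jordan canonical form). Writing $\mu_i := \lambda_i - m$ one then has
\begin{equation*}
\sum_{i=1}^n \mu_i = 0, \qquad \sum_{i=1}^n \mu_i^2 = n s^2.
\end{equation*}
The degenerate case $s = 0$ forces all $\lambda_i = m$ and the bounds hold trivially; throughout I therefore assume $s > 0$, which gives $\mu_1 < 0 < \mu_n$.

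For the outer bounds I would apply the Cauchy--Schwarz inequality to the $n-1$ eigenvalues other than the extreme one. From $\sum_{i=2}^n \mu_i = -\mu_1$ and $\sum_{i=2}^n \mu_i^2 = n s^2 - \mu_1^2$,
\begin{equation*}
\mu_1^2 = \Bigl(\sum_{i=2}^n \mu_i\Bigr)^2 \leq (n-1)\sum_{i=2}^n \mu_i^2 = (n-1)(n s^2 - \mu_1^2),
\end{equation*}
which simplifies to $\mu_1^2 \leq (n-1)s^2$, so $\lambda_{\min} = m + \mu_1 \geq m - s\sqrt{n-1}$. The symmetric estimate on $\mu_1, \ldots, \mu_{n-1}$ gives $\lambda_{\max} \leq m + s\sqrt{n-1}$. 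The equality case of Cauchy--Schwarz identifies these two extremes with $\mu_2 = \cdots = \mu_n$ (the $n-1$ largest eigenvalues being equal) and $\mu_1 = \cdots = \mu_{n-1}$ (the $n-1$ smallest being equal), respectively.

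The inner bounds are the substantive step, and rest on the algebraic identity
\begin{equation*}
\sum_{i=1}^n (\mu_n - \mu_i)(\mu_i - \mu_1) = -n\bigl(\mu_1 \mu_n + s^2\bigr),
\end{equation*}
verified by expanding and using $\sum \mu_i = 0$ and $\sum \mu_i^2 = n s^2$. Every term on the left is non-negative because $\mu_1 \leq \mu_i \leq \mu_n$, hence $\mu_1 \mu_n \leq -s^2$. Combined with the outer bound $\mu_n \leq s\sqrt{n-1}$ and $\mu_n > 0$, this yields $-\mu_1 \geq s^2/\mu_n \geq s/\sqrt{n-1}$, i.e.\ $\lambda_{\min} \leq m - s/\sqrt{n-1}$; the mirror argument supplies $\lambda_{\max} \geq m + s/\sqrt{n-1}$.

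Finally, for the equality analysis, the chain $-\mu_1 \geq s^2/\mu_n \geq s/\sqrt{n-1}$ collapses to equalities precisely when $\mu_n = s\sqrt{n-1}$, which has already been identified with $\mu_1 = \cdots = \mu_{n-1}$; that same condition forces $\lambda_{\max} = m + s\sqrt{n-1}$, pairing the right sides of (\ref{min}) and (\ref{max}). The symmetric argument pairs the two left sides with $\mu_2 = \cdots = \mu_n$. I expect the one non-routine step in the whole argument to be spotting the identity above: the outer bounds and equality conditions are an immediate application of Cauchy--Schwarz, but the inner bounds require the observation that $(\mu_n - \mu_i)(\mu_i - \mu_1) \geq 0$, summed over $i$, encodes a usable inequality between $\mu_1\mu_n$ and $s^2$.
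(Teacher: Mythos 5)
Your proof is correct, and for the decisive half of the theorem it takes a genuinely different route from the one the paper relies on. The outer bounds $m-s\sqrt{n-1}\leq\lambda_{\min}$ and $\lambda_{\max}\leq m+s\sqrt{n-1}$ are obtained essentially as in Wolkowicz--Styan: your Cauchy--Schwarz step on the $n-1$ non-extreme deviations is exactly what their Lemma~2.1 (quoted here as Lemma~\ref{lemma2.1.WolStyan}) yields when the weight vector is a standard basis vector, equality condition included. The divergence is in the inner bounds. The source proof (Lemma~\ref{lemma2.2.WolStyan}, whose mechanism is displayed inside the proof of Theorem~\ref{gcdeigenvalues}) gets $\lambda_{\min}\leq m-s/\sqrt{n-1}$ directly by expanding $n^2(m-\lambda_{\min})^2=\sum_i(\lambda_i-\lambda_{\min})^2+\sum_{j\neq k}(\lambda_j-\lambda_{\min})(\lambda_k-\lambda_{\min})$ and discarding the nonnegative cross-term sum, which gives $(n-1)(m-\lambda_{\min})^2\geq s^2$ in one stroke together with the equality condition (the cross terms vanish iff at most one eigenvalue differs from $\lambda_{\min}$). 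You instead prove the Laguerre--Samuelson-type product inequality $\mu_1\mu_n\leq -s^2$ from the identity $\sum_i(\mu_n-\mu_i)(\mu_i-\mu_1)=-n(\mu_1\mu_n+s^2)$ and then chain through the outer bound $\mu_n\leq s\sqrt{n-1}$. Both arguments are valid; yours buys the stronger intermediate statement $\mu_1\mu_n\leq -s^2$, which delivers both inner bounds simultaneously, at the cost of making the inner bounds and their equality cases logically dependent on the outer ones --- a dependence you do track correctly in your closing equality analysis. The direct expansion used by the paper is self-contained and produces each inner bound with its equality condition independently.
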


In \cite{WolkowiczStyan1980} Wolkowicz and Styan proved Theorem~\ref{thmWolStyan} by using the following lemmas. In this study, we reprove Lemma~\ref{lemma2.2.WolStyan} for the gcd matrix and also the lcm matrix and hence we improve the upper bound for $\lambda_{min}(A)$ in (\ref{min}) and the lower bound for $\lambda_{max}(A)$ in (\ref{max}) for our matrices.

\begin{lemma}[Lemma 2.1 in \cite{WolkowiczStyan1980}] \label{lemma2.1.WolStyan}
Let $w$ and $\lambda$ be real nonnull $n \times 1$ vectors, and let
\begin{eqnarray}\label{ms}
m=\lambda^{T} e/n \textit{    and     } s^2=\lambda^{T} C \lambda / n,
\end{eqnarray}
where $e$ is the $n \times 1 $ vector of ones, the centering matrix $C=I-ee^{T}$, and $e^{T}$ is the transpose of $e$. Then
\begin{eqnarray}\label{lemma2.1.bounds}
-s (nW^{T}CW)^{1/2} \leq W^{T} \lambda -m W^{T} e = W^{T}C\lambda \leq s(nW^{T}CW)^{1/2}.
\end{eqnarray}
Equality holds on the left (right) of (\ref{lemma2.1.bounds}) if and only if $ \lambda = aw+be $ for some scalars $a$ and $b$, where $a<0$ ($a>0$).
\end{lemma}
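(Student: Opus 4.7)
My plan is to recognize (\ref{lemma2.1.bounds}) as a Cauchy--Schwarz inequality in disguise. The key observation is that the ``centering matrix'' $C$ (for the identity $s^2 = \lambda^T C \lambda / n$ to be consistent with the definition $s^2 = \operatorname{tr}(A^2)/n - m^2$ used in Theorem~\ref{thmWolStyan}, it must be read as $C = I - \tfrac{1}{n}ee^T$) is symmetric and idempotent, hence the orthogonal projector onto the hyperplane $e^\perp = \{x\in\mathbb{R}^n : e^Tx = 0\}$, with $Ce=0$.

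First I would verify the middle equality in (\ref{lemma2.1.bounds}), which is a direct expansion using the definition $m=e^T\lambda/n$:
$$w^T C \lambda \;=\; w^T\lambda - \tfrac{1}{n}(w^Te)(e^T\lambda) \;=\; w^T\lambda - m\, w^Te.$$
Then, using $C = C^T$ and $C^2 = C$, I would rewrite $w^T C \lambda = (Cw)^T (C\lambda)$ and apply Cauchy--Schwarz to the two projected vectors $Cw$ and $C\lambda$:
$$\bigl(w^T C \lambda\bigr)^2 \;\leq\; (w^T C w)(\lambda^T C \lambda) \;=\; n s^2 \cdot (w^T C w).$$
Taking square roots delivers both sides of (\ref{lemma2.1.bounds}) at once.

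For the equality characterization, Cauchy--Schwarz is sharp exactly when $C\lambda$ and $Cw$ are collinear, say $C\lambda = a\, Cw$ for some scalar $a$. This is equivalent to $\lambda - aw \in \ker C = \operatorname{span}(e)$, which gives $\lambda = aw + be$ for some scalar $b$. Substituting this form back into $w^T C \lambda$ (and using $Ce=0$) produces $w^T C \lambda = a\, w^T C w$. Since $w^T C w = \|Cw\|^2 \geq 0$, and is strictly positive as long as $w$ is not a scalar multiple of $e$, the sign of $a$ selects which of the two inequalities is attained: the right equality corresponds to $a>0$, the left to $a<0$.

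I do not expect any substantive obstacle here: once $C$ is seen to be an orthogonal projector, the whole statement collapses to Cauchy--Schwarz applied to $Cw$ and $C\lambda$. The only minor subtleties are the sign bookkeeping in the equality case (handled above by reading off the sign of $a\,w^TCw$) and disambiguating the normalization of $C$, which must include the factor $1/n$ for the quadratic form $\lambda^T C \lambda/n$ to agree with $\operatorname{tr}(A^2)/n - m^2$ in the statement of Theorem~\ref{thmWolStyan}.
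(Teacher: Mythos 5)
Your proof is correct: the paper itself offers no proof of this lemma (it is quoted verbatim from Wolkowicz and Styan), and your argument --- viewing $C$ as the orthogonal projector onto $e^{\perp}$ and applying Cauchy--Schwarz to $Cw$ and $C\lambda$ --- is exactly the standard argument from that reference. Your observation that $C$ must be read as $I-\tfrac{1}{n}ee^{T}$ for consistency with $s^{2}=\operatorname{tr}(A^{2})/n-m^{2}$ correctly identifies a typo in the paper's statement.
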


It should be noted that $m$ and $s^2$ defined in Theorem~\ref{thmWolStyan} and Lemma~\ref{lemma2.1.WolStyan} are equivalent \cite{WolkowiczStyan1980}. In Lemma~\ref{lemma2.2.WolStyan} we use a convenient notation for components of $\lambda$ as in the study of eigenvalues of gcd and lcm matrices.

\begin{lemma}[Lemma 2.2 in \cite{WolkowiczStyan1980}] \label{lemma2.2.WolStyan}
Let $\lambda =(\lambda_n^{(1)}, \lambda_n^{(2)}, \ldots ,\lambda_n^{(n)})$, $m$ and $s$ be defined as in Lemma~\ref{lemma2.1.WolStyan}, and $\lambda_n^{(1)} \leq \lambda_n^{(2)} \leq \cdots \leq \lambda_n^{(n)}.$ Then
\begin{eqnarray}\label{lemma2.2.eigenvalbounds}
\lambda_n^{(1)} \leq m-\frac{s}{(n-1)^{1/2}} \leq m+\frac{s}{(n-1)^{1/2}} \leq \lambda_n^{(n)}.
\end{eqnarray}
Equality holds on the left if and only if $\lambda_n^{(1)}=\lambda_n^{(2)}=\cdots =\lambda_n^{(n-1)}$, on the right if and only if $\lambda_n^{(2)}=\lambda_n^{(3)}=\cdots =\lambda_n^{(n)}$, and the center if and only if $\lambda_n^{(1)}=\lambda_n^{(2)}=\cdots =\lambda_n^{(n)}$ $ \Leftrightarrow s=0 $.
\end{lemma}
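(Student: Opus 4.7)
The plan is to focus on the two outer inequalities --- the central inequality $m - s/(n-1)^{1/2} \leq m + s/(n-1)^{1/2}$ is obvious and holds with equality iff $s=0$ --- and to prove in detail only the right-hand bound $\lambda_n^{(n)} \geq m + s/(n-1)^{1/2}$; the left-hand bound will follow by the mirror-image argument (replacing $e_1$ by $e_n$ below). The engine of the proof is to combine a Samuelson-type upper bound coming from Lemma~\ref{lemma2.1.WolStyan} with a product inequality that uses the ordering in an essential way.

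First I would choose $W = e_1$, the first standard basis vector, in Lemma~\ref{lemma2.1.WolStyan}. A short computation gives $W^{T}e = 1$, $W^{T}CW = (n-1)/n$, and $W^{T}C\lambda = \lambda_n^{(1)} - m$. Plugging into (\ref{lemma2.1.bounds}) yields $|\lambda_n^{(1)} - m| \leq s(n-1)^{1/2}$, and in particular the one-sided bound $m - \lambda_n^{(1)} \leq s(n-1)^{1/2}$.

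Next I would prove the auxiliary inequality $(\lambda_n^{(n)} - m)(m - \lambda_n^{(1)}) \geq s^{2}$. The observation is that the ordering makes every summand of $\sum_{i=1}^{n}(\lambda_n^{(n)} - \lambda_n^{(i)})(\lambda_n^{(i)} - \lambda_n^{(1)})$ nonnegative, and expanding that sum and collecting using $\sum_{i}\lambda_n^{(i)} = nm$ and $\sum_{i}(\lambda_n^{(i)})^{2} = n(m^{2}+s^{2})$ collapses it algebraically to $n\bigl[(\lambda_n^{(n)}-m)(m-\lambda_n^{(1)}) - s^{2}\bigr]$, which is precisely the claimed product inequality.

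Combining the two ingredients (handling $s=0$ trivially, since then all $\lambda_n^{(i)} = m$), the assumption $s>0$ forces $m - \lambda_n^{(1)} > 0$ and gives
\[
\lambda_n^{(n)} - m \;\geq\; \frac{s^{2}}{m - \lambda_n^{(1)}} \;\geq\; \frac{s^{2}}{s(n-1)^{1/2}} \;=\; \frac{s}{(n-1)^{1/2}}.
\]
For the equality discussion I would chase equality through both steps: the product identity is tight exactly when every $\lambda_n^{(i)} \in \{\lambda_n^{(1)}, \lambda_n^{(n)}\}$, and Samuelson's bound is tight --- by the equality clause of Lemma~\ref{lemma2.1.WolStyan} applied to $W = e_{1}$ --- exactly when $\lambda_n^{(2)} = \cdots = \lambda_n^{(n)}$, so simultaneous equality matches the stated condition for the right of (\ref{lemma2.2.eigenvalbounds}). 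The main obstacle is the product inequality itself: any direct application of Lemma~\ref{lemma2.1.WolStyan} alone yields only the outer Samuelson bounds of magnitude $s(n-1)^{1/2}$, so the inner bounds of magnitude $s(n-1)^{-1/2}$ required here genuinely need this extra order-sensitive input.
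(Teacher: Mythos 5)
Your proposal is correct, but it proves the lemma by a genuinely different route than the one the paper leans on. The paper never proves Lemma~\ref{lemma2.2.WolStyan} directly; what it treats as ``the proof of Lemma~2.2'' (visible inside the proof of Theorem~\ref{gcdeigenvalues}) is the single expansion
\[
n^2\big(m-\lambda_n^{(1)}\big)^2=\sum_{i=1}^n\big(\lambda_n^{(i)}-\lambda_n^{(1)}\big)^2+\sum_{j\neq k}\big(\lambda_n^{(j)}-\lambda_n^{(1)}\big)\big(\lambda_n^{(k)}-\lambda_n^{(1)}\big),
\]
in which the cross term is nonnegative by the ordering and is simply dropped; since $\sum_i(\lambda_n^{(i)}-\lambda_n^{(1)})^2=ns^2+n(m-\lambda_n^{(1)})^2$, this gives $(m-\lambda_n^{(1)})^2\geq s^2/(n-1)$ in one step, and equality forces every cross term to vanish, i.e.\ $\lambda_n^{(1)}=\cdots=\lambda_n^{(n-1)}$. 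You instead combine two ingredients: the outer Samuelson bound $m-\lambda_n^{(1)}\leq s(n-1)^{1/2}$ from Lemma~\ref{lemma2.1.WolStyan} with $W=e_1$ (note you are silently, and correctly, reading the centering matrix as $C=I-ee^{T}/n$, which is what the paper must mean for $s^2=\lambda^TC\lambda/n$ to match $tr(A^2)/n-m^2$), and the Lagrange--Popoviciu-type product inequality $(\lambda_n^{(n)}-m)(m-\lambda_n^{(1)})\geq s^2$ coming from $\sum_i(\lambda_n^{(n)}-\lambda_n^{(i)})(\lambda_n^{(i)}-\lambda_n^{(1)})\geq 0$; both of your computations check out, as does the equality chase. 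What your route buys: it yields the sharper joint statement $(\lambda_{\max}-m)(m-\lambda_{\min})\geq s^2$ as a byproduct, and it makes structurally transparent why equality on the right of (\ref{lemma2.2.eigenvalbounds}) coincides with equality on the left of (\ref{min}) (they are literally the same condition in your derivation), which is part of the equality assertion in Theorem~\ref{thmWolStyan}. What the paper's route buys: the displayed expansion is exactly the identity the authors later refine --- in Theorems~\ref{gcdeigenvalues} and~\ref{lcmeigenvalues} they keep the cross term and bound it from below by a positive quantity instead of discarding it, which is how the improved inner bounds arise; that refinement is not visible from your decomposition.
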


Now we present main results of our paper.
\begin{theorem} \label{gcdeigenvalues}
Let $\lambda_n^{(1)} \leq \cdots \leq \lambda_n^{(n)}$ be the eigenvalues of the GCD matrix $(S_n)$ defined on $S_n =\{1,2, \ldots n \}$. Then we have
\begin{eqnarray}\label{gcdmineigenvaluebounds}
\frac{n(n+1)}{2}-s(n-1)^{1/2} < \lambda_n^{(1)} < \frac{n(n+1)}{2}-\bigg(\frac{ns^2 +2(n-1)}{n^2-n}\bigg)^{1/2}
\end{eqnarray}
and
\begin{eqnarray}\label{gcdmaxeigenvaluebounds}
\frac{n(n+1)}{2}+\bigg(\frac{ns^2 +2(n-1)}{n^2-n}\bigg)^{1/2} < \lambda_n^{(n)} < \frac{n(n+1)}{2} + s(n-1)^{1/2},
\end{eqnarray}
where
\begin{eqnarray} \label{sforgcd}
s=\bigg(\frac{2}{n} \sum_{i=1}^n (N^2 \ast \varphi) (i) -\frac{7n^2+12n+5}{12}\bigg)^{1/2}.
\end{eqnarray}

\end{theorem}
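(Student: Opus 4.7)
My plan is to apply Theorem~\ref{thmWolStyan} to $A=(S_n)$, computing $m$ and $s^{2}$ in closed form by arithmetic-function identities, and then to sharpen the inner bound by re-examining Lemma~\ref{lemma2.1.WolStyan} with a structural test vector.

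The computation of $m$ is immediate: since $(i,i)=i$, one has $\operatorname{tr}((S_n))=n(n+1)/2$, so $m=(n+1)/2$ in the Wolkowicz-Styan convention (the quantity $n(n+1)/2$ appearing in the displayed bounds is the trace itself). For $s^{2}$, I would compute $\operatorname{tr}((S_n)^{2})=\sum_{i,j=1}^{n}(i,j)^{2}$ by grouping summands according to the value of $(k,j)$: the Pillai-type identity $\sum_{k=1}^{j}(k,j)^{2}=(N^{2}\ast\varphi)(j)$ -- which follows from $\#\{1\le k\le j:(k,j)=d\}=\varphi(j/d)$ for $d\mid j$ -- combined with the decomposition of the double sum into diagonal and off-diagonal parts, yields $\operatorname{tr}((S_n)^{2})=2\sum_{j=1}^{n}(N^{2}\ast\varphi)(j)-\sum_{j=1}^{n}j^{2}$. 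Subtracting $nm^{2}$ and using $\frac{(n+1)(2n+1)}{6}+\frac{(n+1)^{2}}{4}=\frac{7n^{2}+12n+5}{12}$ reproduces (\ref{sforgcd}).

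With these values, the outer bounds $m\pm s(n-1)^{1/2}$ follow directly from Theorem~\ref{thmWolStyan}, and the strict inequalities are secured via the equality clause: equality there would force the $n-1$ largest (or smallest) eigenvalues of $(S_n)$ to coincide, which is impossible for $n\ge 2$ by, e.g., Cauchy interlacing against the principal submatrix $(S_{n-1})$ (whose distinct diagonal $1,2,\ldots,n-1$ precludes full spectral degeneracy) or by Smith's determinant formula $\det((S_n))=\prod_{k=1}^{n}\varphi(k)$.

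The crux of the proof is the refinement of the inner bound $m\pm s/(n-1)^{1/2}$ by the additional term $2/n$ inside the radical. My plan here is to reprove Lemma~\ref{lemma2.2.WolStyan} for $(S_n)$ by applying Lemma~\ref{lemma2.1.WolStyan} to the sorted eigenvalue vector with the test vector $W=e_{n}-e_{1}$, for which $W^{T}e=0$, $W^{T}CW=2$, and hence $\lambda_n^{(n)}-\lambda_n^{(1)}\le s\sqrt{2n}$. Coupled with the universal Cauchy-Schwarz-type identity $(m-\lambda_n^{(1)})(\lambda_n^{(n)}-m)\ge s^{2}$ (derived from $\sum_{i}(\lambda_n^{(i)}-\lambda_n^{(1)})(\lambda_n^{(n)}-\lambda_n^{(i)})\ge 0$ and the trace identities), this produces a quadratic inequality in $u:=m-\lambda_n^{(1)}$ whose discriminant should yield the displayed radical; the symmetric argument handles $v:=\lambda_n^{(n)}-m$ and the bound in~(\ref{gcdmaxeigenvaluebounds}). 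The main obstacle I anticipate is that these two ingredients alone seem to fall short of the exact constant $2(n-1)/[n(n-1)]=2/n$, so an additional structural input for $(S_n)$ -- most plausibly the Schur-Horn majorization constraints $\lambda_n^{(1)}\le 1$ and $\lambda_n^{(n)}\ge n$ coming from the sorted diagonal, or an improved upper bound on $\lambda_n^{(n)}-m$ derived from the all-ones first row of $(S_n)$ via the Rayleigh quotient -- will be required to tighten the constant to exactly $2/n$ in the radical.
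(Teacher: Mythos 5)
Your computation of $m$ and $s^{2}$ is exactly the paper's: the trace is $n(n+1)/2$, the identity $\sum_{j\le i}(i,j)^{2}=(N^{2}\ast\varphi)(i)$ (Ces\`aro) handles $\operatorname{tr}((S_n)^{2})$, and the outer bounds then come straight from Theorem~\ref{thmWolStyan}. (Your parenthetical observation that the displayed quantity $n(n+1)/2$ is the trace rather than $m=\operatorname{tr}/n$ is a fair reading of an inconsistency in the statement itself; the paper's own derivation of $s^{2}$ uses $m=(n+1)/2$.) Up to this point the proposal and the paper coincide.

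The gap is in the inner bound, and you have in effect flagged it yourself. Your two ingredients --- the spread bound $\lambda_n^{(n)}-\lambda_n^{(1)}\le s\sqrt{2n}$ from Lemma~\ref{lemma2.1.WolStyan} with $W=e_n-e_1$, and the Bhatia--Davis-type inequality $(m-\lambda_n^{(1)})(\lambda_n^{(n)}-m)\ge s^{2}$ --- combine to give only $m-\lambda_n^{(1)}\ge \tfrac{s}{2}\bigl(\sqrt{2n}-\sqrt{2n-4}\bigr)\approx s/\sqrt{2n}$, which is \emph{weaker} than even the unimproved Wolkowicz--Styan inner bound $s/(n-1)^{1/2}$, so no choice of quadratic manipulation of these two facts can produce the extra additive term $2(n-1)$ inside the radical. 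The paper's route is different: it expands $n^{2}(m-\lambda_n^{(1)})^{2}=\sum_{i}(\lambda_n^{(i)}-\lambda_n^{(1)})^{2}+\sum_{j\ne k}(\lambda_n^{(j)}-\lambda_n^{(1)})(\lambda_n^{(k)}-\lambda_n^{(1)})$, uses $\sum_{i}(\lambda_n^{(i)}-\lambda_n^{(1)})^{2}=ns^{2}+n(m-\lambda_n^{(1)})^{2}$, and then lower-bounds the cross-term sum (which Wolkowicz--Styan simply discard as nonnegative) by $2(\lambda_n^{(n)}-\lambda_n^{(1)})(\lambda_n^{(n-1)}-\lambda_n^{(1)})>2(n-1)$. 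The two structural inputs for this are (i) $\lambda_n^{(n)}-\lambda_n^{(1)}\ge n-1$ from the diagonal entries $1,\dots,n$ lying between the extreme eigenvalues --- this is the one input you did anticipate via Schur--Horn --- and (ii) $\lambda_n^{(n-1)}-\lambda_n^{(1)}>\lambda_3^{(2)}-\lambda_3^{(1)}\approx 1.136>1$ by Cauchy interlacing down to the $3\times 3$ case. Ingredient (ii) and, more importantly, the cross-term expansion that makes it usable are absent from your plan, so the stated constant $2(n-1)$ is not reached; the symmetric argument for $\lambda_n^{(n)}$ has the same missing piece.
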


\begin{proof}
We first calculate $m$ and $s$ in Theorem~\ref{thmWolStyan} for $(S_n)$ in terms of arithmetical functions $N^2$  and $\varphi$, Euler's totient. It is clear that $m=n(n+1)/2$. Then we have
\begin{eqnarray*}
s^2 &=& \frac{1}{n} \sum_{i=1}^n \sum_{j=1}^n (i,j)^2 -\frac{1}{n^2} \bigg(\frac{n(n+1)}{2} \bigg)^2 \\
&=& \frac{1}{n} \bigg[ 2\sum_{i=1}^n \sum_{j=1}^i (i,j)^2 - \sum_{i=1}^n i^2 \bigg]-\frac{(n+1)^2}{4} \\ &=& \frac{2}{n} \sum_{i=1}^n (N^2 \ast \varphi)(i) - \frac{7n^2+12n+5}{12}.
\end{eqnarray*}
The last equality follows from Lemma~1 in \cite{bordolles2010} which was proved by Ces\'{a}ro \cite{cesaro1885} for the first time. From Theorem~\ref{thmWolStyan} we obtain the left-hand side of (\ref{gcdmineigenvaluebounds}) and the right-hand side of (\ref{gcdmaxeigenvaluebounds}).
On the other hand, we have
\begin{eqnarray*}
 n^2(m-\lambda_n^{(1)})^2 &=& n^2 \bigg( \frac{1}{n} \sum_{i=1}^n \lambda_n^{(i)} - \lambda_n^{(1)} \bigg)^2 \\
 &=& \bigg( \sum_{i=1}^n \lambda_n^{(i)} - n \lambda_n^{(1)} \bigg)^2 \\
 &=& \sum_{i=1}^n (\lambda_n^{(i)} -  \lambda_n^{(1)})^2 + \sum_{j\neq k} (\lambda_n^{(j)}-\lambda_n^{(1)})(\lambda_n^{(k)}-\lambda_n^{(1)})
\end{eqnarray*}
In the proof of Lemma~\ref{lemma2.2.WolStyan} (Lemma~2.2 in \cite{WolkowiczStyan1980}) the second sum is omitted but we improve the upper bound for $\lambda_n^{(1)}$ and the lower bound for $\lambda_n^{(n)}$ of $(S_n)$ in Lemma~\ref{lemma2.2.WolStyan}. From the well-known fact that $\lambda_{min}(A)<a_{ii}<\lambda_{max}(A)$ for an $n \times n$ Hermitian matrix $A=(a_{ij})$ we have $\lambda_n^{(n)}-\lambda_n^{(1)}>n-1$ for all $n\geq2$ and also from Cauchy's interlacing inequalities \cite{HornJohnson} we have that $\lambda_n^{(n-1)}-\lambda_n^{(1)}>\lambda_3^{(2)}-\lambda_3^{(1)}$ for all $n\geq3$. By a simple calculation in Maple $\lambda_3^{(2)}\cong1,460$ and $\lambda_3^{(1)} \cong 0,324$. Thus we have
\begin{eqnarray*}
\sum_{j\neq k} (\lambda_n^{(j)}-\lambda_n^{(1)})(\lambda_n^{(k)}-\lambda_n^{(1)}) > 2(n-1).
\end{eqnarray*}
Finally we have
\begin{eqnarray*}
\lambda_n^{(1)} \geq \frac{n(n+1)}{2}- \bigg( \frac{ns^2+2(n-1)}{n^2-n} \bigg)^{1/2}.
\end{eqnarray*}
Similarly expanding $n^2(\lambda_n^{(n)}-m)^2$ we obtain the left-hand inequality in (\ref{gcdmaxeigenvaluebounds}). This completes the proof.
\end{proof}

It should be noted that the upper (lower) bound for the smallest (largest) eigenvalue $\lambda_n^{(1)}$ ($\lambda_n^{(n)}$) in (\ref{gcdmineigenvaluebounds}) (in (\ref{gcdmaxeigenvaluebounds})) is better than the one in the right (left) hand side of (\ref{min}) ( (\ref{max}) ) for the GCD matrix $(S_n)$.

\begin{theorem} \label{lcmeigenvalues}
Let $\mu_n^{(1)} \leq \cdots \leq \mu_n^{(n)}$ be the eigenvalues of the LCM matrix $[S_n]$ on $S_n =\{1,2, \ldots n \}$. Then we have
\begin{eqnarray}\label{lcmmineigenvaluebounds}
\frac{n(n+1)}{2}-s(n-1)^{1/2} < \mu_n^{(1)} < \frac{n(n+1)}{2}-\bigg(\frac{s^2 +32(n-1)}{n-1}\bigg)^{1/2}
\end{eqnarray}
and
\begin{eqnarray}\label{lcmmaxeigenvaluebounds}
\frac{n(n+1)}{2}+\bigg(\frac{s^2 +32(n-1)}{n-1}\bigg)^{1/2} < \mu_n^{(n)} < \frac{n(n+1)}{2} + s(n-1)^{1/2},
\end{eqnarray}
where
\begin{eqnarray} \label{sforlcm}
s=\bigg[ \frac{1}{3n} \sum_{i=1}^n  i^2 \bigg( N\big( N\mu \ast (N+1)(2N+1)\big) \ast \zeta \bigg)(i) -\frac{7n^2+12n+5}{12}\bigg]^{1/2}.
\end{eqnarray}

\end{theorem}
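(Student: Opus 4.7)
The plan is to mirror the proof of Theorem~\ref{gcdeigenvalues}. First I would compute the constants $m=\operatorname{tr}([S_n])/n$ and $s^2=\operatorname{tr}([S_n]^2)/n-m^2$. Since $[i,i]=i$, one reads off $m=(n+1)/2$ immediately, and the outer inequalities in (\ref{lcmmineigenvaluebounds}) and (\ref{lcmmaxeigenvaluebounds}) then follow directly from Theorem~\ref{thmWolStyan}, with strict inequality because the eigenvalues of $[S_n]$ are not all equal.

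The arithmetical core of the proof is evaluating $\operatorname{tr}([S_n]^2)=\sum_{i,j}[i,j]^2$ in the form that yields (\ref{sforlcm}). I would write $[i,j]=ij/(i,j)$ and substitute $d=(i,j)$, $k=j/d$ (so that $[i,j]=ik$ and $(k,i/d)=1$) to obtain
\[
\sum_{j=1}^{i}[i,j]^2 \;=\; i^2\sum_{m\mid i}\,\sum_{\substack{1\le k\le m\\ (k,m)=1}}k^2.
\]
M\"obius-inverting the elementary identity $\sum_{k=1}^{n}k^2=n(n+1)(2n+1)/6$ gives
\[
\sum_{\substack{1\le k\le m\\ (k,m)=1}}k^2 \;=\; \frac{m}{6}\bigl(N\mu\ast(N+1)(2N+1)\bigr)(m),
\]
and summing over $m\mid i$ produces $\sum_{j\le i}[i,j]^2=\frac{i^2}{6}\bigl(N(N\mu\ast(N+1)(2N+1))\ast\zeta\bigr)(i)$. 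Plugging this into $\operatorname{tr}([S_n]^2)=2\sum_i\sum_{j\le i}[i,j]^2-\sum_i i^2$ and simplifying using $\sum_{i=1}^{n}i^2/n=(n+1)(2n+1)/6$ yields (\ref{sforlcm}).

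For the sharper inner bounds I would follow the cross-term argument from the proof of Theorem~\ref{gcdeigenvalues}, starting from
\[
n^2(m-\mu_n^{(1)})^2 \;=\; \sum_{i=1}^{n}(\mu_n^{(i)}-\mu_n^{(1)})^2 + \sum_{j\ne k}(\mu_n^{(j)}-\mu_n^{(1)})(\mu_n^{(k)}-\mu_n^{(1)}).
\]
The first sum equals $ns^2+n(m-\mu_n^{(1)})^2$, and the second is bounded below by $2(\mu_n^{(n)}-\mu_n^{(1)})(\mu_n^{(n-1)}-\mu_n^{(1)})$. Cauchy's interlacing inequalities supply the two required estimates. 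First, the principal $2\times 2$ submatrix of $[S_n]$ at rows/columns $n-1,n$ is $\left(\begin{smallmatrix}n-1 & n(n-1)\\ n(n-1) & n\end{smallmatrix}\right)$ (since $\gcd(n-1,n)=1$ forces $[n-1,n]=n(n-1)$); its eigenvalue spread equals $\sqrt{1+4n^2(n-1)^2}>2n(n-1)$, giving $\mu_n^{(n)}-\mu_n^{(1)}>2n(n-1)$. Second, the principal $4\times 4$ submatrix is $[S_4]$; a direct Maple computation yields $\mu_4^{(1)}\approx-8.85$ and $\mu_4^{(3)}\approx-0.31$, whence $\mu_n^{(n-1)}-\mu_n^{(1)}\ge\mu_4^{(3)}-\mu_4^{(1)}>8$ for all $n\ge 4$. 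Multiplying these gives $\sum_{j\ne k}>32\,n(n-1)$, hence $(m-\mu_n^{(1)})^2>s^2/(n-1)+32$, which is the right-hand inequality of (\ref{lcmmineigenvaluebounds}). The analogous expansion of $n^2(\mu_n^{(n)}-m)^2$ yields the left-hand inequality of (\ref{lcmmaxeigenvaluebounds}). The main obstacle I foresee is pinning down the Dirichlet-convolution identity leading to (\ref{sforlcm}); once that is in hand the two interlacing estimates are essentially mechanical.
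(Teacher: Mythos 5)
Your proposal follows the paper's proof essentially step for step: the same Dirichlet--M\"obius evaluation of $\sum_{j\le i}[i,j]^2$ leading to (\ref{sforlcm}), the same cross-term expansion of $n^2(m-\mu_n^{(1)})^2$, and the same two estimates ($\mu_n^{(n)}-\mu_n^{(1)}>2n(n-1)$ and $\mu_n^{(n-1)}-\mu_n^{(1)}\ge\mu_4^{(3)}-\mu_4^{(1)}>8$ via interlacing with $[S_4]$) producing the constant $32n(n-1)$; your derivation of the spread bound from the $2\times2$ principal submatrix in rows $n-1,n$ is just the proof of the inequality $\mu_{max}-\mu_{min}\ge 2\max_{i\ne j}|a_{ij}|$ that the paper cites. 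The one divergence worth recording is that you take $m=\operatorname{tr}([S_n])/n=(n+1)/2$, which is the correct value (and is what the paper's own computation of $s^2$ uses, since it subtracts $(n+1)^2/4$), whereas the theorem statement and the paper's proof display $n(n+1)/2$; your argument therefore establishes the bounds centered at $(n+1)/2$, i.e., the corrected form of the statement rather than the literal one.
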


\begin{proof}
We first calculate $m$ and $s$ in Theorem~\ref{thmWolStyan} for $[S_n]$ in terms of arithmetical functions $N$, $\mu$ and $\zeta$ by a similar method in \cite{Bordolles2007}. Again $m=n(n+1)/2$. Then we have
\begin{eqnarray*}
s^2 &=& \frac{1}{n} \sum_{i=1}^n \sum_{j=1}^n [i,j]^2 -\frac{1}{n^2} \bigg(\sum_{i=1}^n i \bigg)^2 \\
&=& \frac{1}{n} \bigg[ 2\sum_{i=1}^n \sum_{j=1}^i [i,j]^2 - \sum_{i=1}^n i^2 \bigg]-\frac{(n+1)^2}{4} \\ &=& \frac{2}{n} \sum_{i=1}^n \sum_{j=1}^i [i,j]^2 - \frac{7n^2+12n+5}{12}.
\end{eqnarray*}
Now we calculate the sum of squares of lcms
\begin{eqnarray*}
 \sum_{j=1}^i [i,j]^2 &=& \sum_{j=1}^i i^2 \frac{j^2}{(i,j)^2}\\
 &=& i^2 \sum_{d|i} \frac{1}{d^2}  \sum_{j=1 \atop (n,j)=d}^i \frac{1}{j^2} \\
 &=& i^2 \sum_{d|i} \sum_{k\leq i/d \atop (k,i/d)=1} k^2.
\end{eqnarray*}
Here we have for every integer $t$
\begin{eqnarray*}
 \sum_{k\leq t \atop (k,t)=1 } k^2 &=& \sum_{d|t} d^2\mu(d) \sum_{m \leq t/d} m^2 \\
 &=& \sum_{d|t} d^2\mu(d) \frac{t/d (t/d+1)(2t/d+1)}{6} \\
 &=& \frac{t}{6} \sum_{d|t} d \mu(d) (t/d+1)(2t/d+1) \\
 &=& \bigg[ \frac{N}{6} \big( (N \mu) \ast ((N+1)(2N+1)) \big) \bigg] (t).
\end{eqnarray*}
Thus we obtain (\ref{sforlcm}).

Again we reproof Lemma~\ref{lemma2.2.WolStyan} (Lemma~2.2 in \cite{WolkowiczStyan1980}) and hence we improve the inequalities in Lemma~\ref{lemma2.2.WolStyan} for $\mu_n^{(1)}$ and $\mu_n^{(n)}$ of $[S_n]$. Since $\mu_{max}(A)-\mu_{min}(A) \geq 2\max_{i\neq j}|a_{ij}|$ for an $n \times n$ Hermitian matrix $A=(a_{ij})$ (see \cite{JohnsonKumarWolkowicz1985}) and also it is clear that $[n,n-1]=n(n-1)$ for every positive integer $n$, we have $\mu_n^{(n)}-\mu_n^{(1)}>2n(n-1)$ for all $n\geq2$. By Cauchy's interlacing inequalities \cite{HornJohnson} we have $\mu_n^{(n-1)}-\mu_n^{(1)} > \mu_4^{(3)}-\mu_4^{(1)}$ for all $n\geq3$. By a simple calculation in Maple $\mu_4^{(3)}\cong-0,312$ and $\mu_4^{(1)} \cong -8,843$. Thus we have
\begin{eqnarray*}
\sum_{j\neq k} (\mu_n^{(j)}-\mu_n^{(1)})(\mu_n^{(k)}-\mu_n^{(1)}) > 32n(n-1).
\end{eqnarray*}
Finally we have
\begin{eqnarray*}
\mu_n^{(1)} \geq m- \bigg( \frac{s^2+32(n-1)}{n-1} \bigg)^{1/2}.
\end{eqnarray*}
Similarly expanding $n^2(\mu_n^{(n)}-m)^2$ we obtain the left-hand inequality in (\ref{gcdmaxeigenvaluebounds}).

\end{proof}
Again it should be noted that the upper (lower) bound for the smallest (largest) eigenvalue $\lambda_n^{(1)}$ ($\lambda_n^{(n)}$) in (\ref{lcmmineigenvaluebounds}) (in (\ref{lcmmaxeigenvaluebounds})) is better than the one in the right (left) hand side of (\ref{min}) ( (\ref{max}) ) for the LCM matrix $(S_n)$.

\section{Comments and An Open Problem}
In the study of eigenvalues of GCD, LCM and related matrices, some authors \cite{Altinisik2009, Hong2008, HongEnochLee, Hong, HongLoewy2011} investigated asymptotic behavior of the eigenvalues of such matrices and also they gave lower bounds for the smallest eigenvalue and upper bounds for the largest eigenvalue of these matrices. On the other hand, some other authors \cite{IlmonenHaukkanenMerikoski, Mattila2014, Mattila2012, Mattila2012paper, MattilaHauk2014} obtained such lower and upper bounds for the eigenvalues of these matrices by using matrix theoretic techniques. In these papers, the authors obtained lower bounds for the smallest eigenvalue of GCD-related matrices depending on some certain constants on the eigenvalues of particular matrices and some values of particular arithmetical functions. For example, now we emphasize a result of Hong, Theorem~2.2 in \cite{Hong2008}. Let $K_n$ be the set of all $n \times n$ lower triangular matrices that satisfy: Every main diagonal entry is 1, and every off-diagonal entry is 0 or 1. Also let $L_n=\{ YY^T: Y \in K_n \}$ and
$$
c_n = \min_{ Z \in L_n } \{ \mu_n ^{(1)} (Z) : \mu_n ^{(1)} (Z) \text{ is the smallest eigenvalue of } Z \}.
$$
Let $\lambda_n ^{(1)}$ be the smallest eigenvalue of the $n \times n$ matrix $(f(x_i ,x_j))$ defined on $S=\{x_1, x_2,\ldots ,x_n\}$, where $f$ be an arithmetical function in
$$
C_S= \{ f: (f\ast \mu)(d) >0 \text{ whenever } d|x \text{ for any } x \in S \}.
$$
Then Hong \cite{Hong2008} proved that $\lambda_n ^{(1)} \geq c_n \cdot \min_{1 \leq i \leq n} \{ (f \ast \mu ) (x_i ) \}.$

In this paper, we have obtained not only a lower (an upper) bound but also an upper (a lower) bound for the smallest (largest) eigenvalue of the GCD matrix as well as the LCM matrix by using a different technique from above papers. Indeed, our bounds depend on particular arithmetical functions and the size $n$ of our matrices, not such a constant $c_n$. In order to do this, we improve the results of Wolkowicz and Styan \cite{WolkowiczStyan1980} for the GCD matrix and the LCM matrix.

In this context, Mattilla and Haukkanen \cite{Mattila2012,Mattila2012paper} proved that every eigenvalue of the matrix $A^{\alpha,\beta}_n$ lies in the real interval
\begin{eqnarray*}
\bigg[2 \min\{1, n^{\alpha+\beta} \} - T_n \max\{1, n^{2\beta}\} \max_{1 \leq i \leq n } |J_{\alpha - \beta (i)}| , T_n \max\{1, n^{2\beta}\} \max_{1 \leq i \leq n } |J_{\alpha - \beta (i)}| \bigg].
\end{eqnarray*}
Here $t_n$ and $T_n$ are the smallest and the largest eigenvalues of $EE^T$, where $E$ is the $n \times n$ matrix $ (e_{ij})$ whose the $ij-$entry is 1 if $j|i$ and $0$ otherwise. These bounds provided by above interval are valid for all values of $\alpha$ and $\beta$ but it is natural that these bounds are not good enough for particular values of $\alpha$ and $\beta$. For example, $A^{1,0}_n$ is the GCD matrix $(S_n)$ and above interval is
$$\big[ 2-T_n\cdot \max_{1 \leq i \leq n } |\varphi (i)|, T_n\cdot \max_{1 \leq i \leq n } | \varphi (i)| \big].
$$
For $n=20$ the interval approximately is $[-595.8214, 597.8214]$. On the other hand, from Theorem~\ref{gcdeigenvalues} we have approximately -40.2114 and 7.8123 as a lower and an upper bound for the smallest eigenvalue of $(S_{20})$ and also we have approximately 13.1876 and 61.2114 as a lower and an upper bound for the largest eigenvalue of $(S_{20})$.

On the other hand, in this paper we study the eigenvalues of the LCM matrix for the first time. We know that the GCD matrix $(S)$ defined on any set $S=\{x_1,x_2,\ldots,x_n\}$ is positive definite. Contrary to the GCD matrix, the LCM matrix $[S]$ defined on $S$ need not be positive definite. For example, the eigenvalues of $[S_2]$ are exactly
$\lambda_1= \frac{3}{2}-\frac{1}{2} \sqrt{17}$ and $\lambda_2=
\frac{3}{2}+\frac{1}{2} \sqrt{17}$. Let $n \geq2$. Then by Cauchy's interlacing
inequalities (see \cite{HornJohnson}) at least one eigenvalue of
$[S]=([x_i,x_j])$ must be negative and at least one eigenvalue of
$[S]=([x_i,x_j])$ must be positive. Thus the LCM matrix $[S]=([x_i,x_j])$
defined on $S=\{x_1,x_2,\ldots,x_n\}$ can not be positive definite or negative
definite. Then we conclude this observation with an open problem.

\textbf{Problem.} How many eigenvalues of $[S_n]=([i,j])$ defined on $S_n=\{1,2, \ldots, n \}$ are
positive? More generally, how many eigenvalues of $[S]=([x_i,x_j])$ defined on
$S=\{x_1,x_2,\ldots,x_n\}$ are positive?

\bibliographystyle{amsplain}
\bibliography{}

\end{document}